\newenvironment{enumeratei}{\begin{enumerate}[\upshape (i)]} {\end{enumerate}}
\numberwithin{equation}{section}
\newcommand \gyemant {$\pmb\lozenge$}
\theoremstyle{plain}
 \newtheorem{theorem}{Theorem}[section]
 \newtheorem{lemma}[theorem]{Lemma}
\theoremstyle{definition}
 \newtheorem{remark}[theorem]{Remark}
\theoremstyle{remark}
 \newtheorem*{ackno}{Acknowledgment}
\theoremstyle{plain} %in the Chapter
\theoremstyle{definition} %in the Chapter 
\theoremstyle{remark} %in the Chapter
\newcommand\semmi[1] {}
\newcommand\piros[1]{{\textcolor{red}{#1}}}
\newcommand\tudnivalo [1] {}  %from CzG-Sch:Patchwork
\newcommand \temphint [1]{}
\newcommand \skinfo [1] {}
\newcommand \ssty [1] {\scriptstyle{#1}}
\newcommand \sssty [1] {\scriptscriptstyle{#1}}
\newcommand \tbf[1] {\textbf{#1}}
\renewcommand\phi{\varphi}
\renewcommand\rho{\varrho}   
\renewcommand\epsilon{\varepsilon} %in CzG-Sch:CompSer
\renewcommand \theta {\vartheta}   %in CzG-Sch:CompSer
\newcommand \Jir [1] {\textup{Ji}\,#1} 
\newcommand \length [1] {\textup{length}\,#1}
\newcommand\ideal[1]{\mathord\downarrow #1}
\newcommand \leftb [1]  {\textup{BC}_{{\ell}}(#1)} %left boundary chain of #1 
\newcommand \cornl [1] { w_{#1}^\ell } % weak left corner of #1 %from Patchwork 
\newcommand \cornr [1] { w_{#1}^r }    % weak right corner of #1 %from Patchwork 
\newcommand \ilupphold [2] {\textup{sp}_{\sssty{\textup{left}}}^{\kern-2pt +\kern 2 pt #2}(#1)} %in the Chapter
\newcommand \irupphold [2] {\textup{sp}_{\sssty{\textup{right}}}^{\kern-2pt +\kern 2pt #2}(#1)} %in the Chapter
\newcommand \uucover [1] {#1^{\mathord{+}}} % Unique Upper COVER of #1 %in the Chapter
\newcommand \ulcover [1] {#1^{\mathord{-}}} % Unique Lower COVER of #1 %in the Chapter
\newcommand \smLatsign {\textup{SSL}}
\newcommand \smDiagsign {\textup{SSD}}
\newcommand \multi [1] {{\ddot{#1}}}
\newcommand \multismLatsign {\textup{SS}\multi{\textup{L}}}
\newcommand \dsmLat[1] {\smLatsign{}^{\kern-1pt\delta\kern-1pt}(#1)}
\newcommand \dpsmLat[1] {\kern-1pt{\multismLatsign}{}^{\kern-1pt\delta\kern-1pt}(#1)}
\newcommand \dismLat[1] {\smLatsign{}^{\kern-1pt\delta\kern-1pt}(#1)\fentjelkong1}
\newcommand \dipsmLat[1] {{\multismLatsign}{}^{\kern-1pt\delta\kern-1pt}(#1)\fentjelkong1}
\newcommand \fentjelkong[1]{{}^{\kern- #1pt \mathord{\cong}}}
\newcommand \fentjelsim[1]{{}^{\kern- #1pt \mathord{\sim}}}
\newcommand \sampleclssign {\mathcal K}
\newcommand \dsampleclasps[1]  {\sampleclssign{}^{\kern0pt\delta}(#1)}
\newcommand \disampleclasps[1] {\sampleclssign{}^{\kern0pt\delta}(#1)\fentjelkong0}
\newcommand \LatCompSerSign {\textup{CSL}}
\newcommand \multiLatCompSerSign {\textup{CS}\multi{\textup{L}}}
\newcommand \dLcs  [1] {\LatCompSerSign{}^{\kern0pt\delta}(#1)}
\newcommand \dpLcs [1] {{}^{\kern0pt\delta}\kern-1pt\multi{\LatCompSerSign}(#1)}
\newcommand \diLcs [1] {\LatCompSerSign{}^{\kern0pt\delta}(#1)\fentjelkong1}
\newcommand \dipLcs[1] {{\multiLatCompSerSign}{}^{\kern0pt\delta}(#1)\fentjelkong1}
\newcommand \pblock [1] {[#1]^{\kern-1pt\mathord{\sim}}}
\newcommand \simfactor [1] {#1/{\kern-1pt\mathord{\sim}}}
\newcommand \Bpernik [2] {P^{\kern-1pt\mathord{\sim}}(#1,#2)} %Set of blocks with #2 inversions
\newcommand \bperniksign {p^{\kern-1pt\mathord{\sim}}} 
\newcommand \Bhpernik [4] {P^{\kern-1pt\mathord{\sim}}_{#1,#2}(#3,#4)} %Set of blocks with #4 inversions and  #1#2 head|
\newcommand \bhpernik [4] {p^{\kern-1pt\mathord{\sim}}_{#1,#2}(#3,#4)} %|Set of blocks with #2 inversions and  #1#2 head||
\newcommand \Ipernik [2] {\widehat{\kern 1pt P}(#1,#2)} %Set of irred's  with #2 inversions
\newcommand \iperniksign{\widehat {\kern 1pt p} } 
\newcommand \Itpernik [2] {\widehat{\kern 1pt I}(#1,#2)} %Set of irred's  with #2 inversions with order two
\newcommand \itperniksign {\widehat {\kern 1pt i}}
\newcommand \restrict [2] {{#1}\kern-1pt \rceil_{\kern-1pt #2}}
\newcommand\set [1]{\{#1\}}
\newcommand \celldn [1] {\textup{cell}_{\kern-0.3pt\sssty{\mathord{\lozenge}}}(#1)}
\newcommand \cellup [1] {\textup{cell}^{\kern-0.7pt\sssty{\mathord{\lozenge}}}(#1)}
\newcommand \cgjcon [1] {\textup{con}_{\vee}\kern-1pt(#1)} %cell-generated join-congruence
\newcommand \boper [1] {#1^{\kern-1pt\ssty{{\mathord=\kern-5pt\mathord{\parallel}}}}}
\newcommand \kernit {{\kern -6pt}}
\newcommand \tbu[1] {{ \phantom{\Big|} \kern -2pt \boxed{\tbf{#1}}  }}  
\newcommand \tbw[1] {{ \phantom{\Big|} \kern -2pt \boxed{\tbf{#1}}\kernit}}  
\newcommand \secie {\boldsymbol{\rho}_{\kern -1pt e}^i}
\newcommand \numssl [1] {N_{\textup{ssl}}(#1)}
\newcommand \numssd [1] {N_{\textup{ssd}}(#1)}
\newcommand \bpbeta[1] {\boldsymbol{\beta}_{\kern-1pt #1}}
\newcommand \joing {\mathrel{\mathord\vee_{\kern -2pt G}}}
\newcommand \joinl {\mathrel{\mathord\vee_{\kern -2pt L}}}
\newcommand \joinvg {\mathrel{\mathord\vee_{\kern -2pt G'}}}
\newcommand \diai [1] {#1^{{\kern-0.7pt\natural}}}
\newcommand \diah [1] {#1{}^{{\kern-0.7pt  \sssty{{\triangledown}} }}}
\newcommand \diag [1] {#1{}^{{\kern-0.7pt\ast}}}
\newcommand \diac [1] {(#1)^{{\kern-0.7pt\diamond}}}
\newcommand \diavarc [1] {#1^{{\kern-0.7pt\diamond}}}
\newcommand \semipatch  {{ \,\pmb{\mathcal H}\kern 0.5pt}} % set of length-two rectangular intervals, or a patchwork system @@? NOT an operator, one lattice can have many such systems!  %from Patchwork 
\newcommand \patch [1] { \pmb{\mathcal P}_{\kern-2pt\textup{max}}(#1)} %from Patchwork @@?  
\newcommand \aslim [1] {#1^{\kern-1pt \bullet}} %antislimming of #1. %from Patchwork
\newcommand \cproj {\mathrel{ \mathord{\Rightarrow} \kern-7.5pt \mathord{\Rightarrow} }} %in the Chapter
\newcommand \cpreq {\mathrel{ \mathord{\Leftarrow}  \kern-7.5pt \mathord{\Leftrightarrow} \kern-7.5pt \mathord{\Rightarrow} }} %in the Chapter
\newcommand \uppers {\,{\buildrel{\sssty{\textup{up}}}\over \rightarrow}\kern-8pt\mathord{\rightarrow}\; } %in the Chapter
\newcommand \upperpa [1] {\,{\buildrel{\sssty{\textup{up}}}\over \rightarrow}\kern-8pt\mathord{\rightarrow}_{#1}\; } %in the Chapter
\newcommand \dnpers {\,{\buildrel{\sssty{\textup{dn}}}\over \rightarrow}\kern-8pt\mathord{\rightarrow}\; } %in the Chapter
\newcommand \dnperpa [1] {\,{\buildrel{\sssty{\textup{dn}}}\over \rightarrow}\kern-8pt\mathord{\rightarrow}_{#1}\; } %in the Chapter
\newcommand \drestrict [2] {{#1}\rceil_{\kern-1pt #2}} %in the Chapter
\newcommand \url [1] {\texttt{#1}}
\newcommand \konst{C}
\newcommand \brang[1]{\textup{rank}_\ell(#1)}
\newcommand \jrang[1]{\textup{rank}_r(#1)}
\newcommand \ssdset[1] {\smDiagsign(#1)}
\newcommand \ssdnulset[1] {\smDiagsign_{00}(#1)}
\newcommand \ssdpozset[1] {\smDiagsign_{\textup{++}}(#1)}
\newcommand \ssdeeset[1] {\smDiagsign_{11}(#1)}
\newcommand \csonkad {D^\ast}
\newcommand \upintp [1] {\lceil #1\rceil}
\newcommand \upintgy [1] {\upintp{\sqrt{#1}\,}}
\newcommand \cissor [1]  {#1^{\blacktriangleleft}}
\newcommand \rspace {\kern 0.8pt}% resctriction after prime
\begin{document}
\title[Asymptotic number of slim, semimodular  diagrams]
{The asymptotic number of planar, slim, semimodular lattice diagrams}
\author[G.\ Cz\'edli]{G\'abor Cz\'edli}
\email{czedli@math.u-szeged.hu}
\urladdr{http://www.math.u-szeged.hu/$\sim$czedli/}
\address{University of Szeged\\Bolyai Institute\\
Szeged, Aradi v\'ertan\'uk tere 1\\HUNGARY 6720}

%Classification:
\thanks{2010 \emph{Mathematics Subject Classification.} 06C10.}
%Semimodular lattices, geometric lattices 

%% Thanks
\thanks{This research was supported by the NFSR of Hungary (OTKA), grant numbers   K77432 and K83219, and by T\'AMOP-4.2.1/B-09/1/KONV-2010-0005}

%% Dedication 

\keywords{Counting lattices, semimodularity,   planar lattice diagram, slim semimodular lattice}

\date{June 16, 2012}

\begin{abstract} A lattice $L$ is \emph{slim} if it is finite and the set of its join-irreducible elements contains no three-element antichain. 
We prove that there exists a positive constant $\konst$ such that, up to
similarity,  the number of planar diagrams of these lattices of size $n$ is asymptotically 
$\konst\cdot 2^n$.

\end{abstract}

\maketitle

\section{Introduction and the result}\label{section:intRo}
A finite lattice $L$ is \emph{slim} if $\Jir L$, the set of 
join-irreducible elements of $L$, contains no three-element antichain.
Equivalently, $L$ is slim if $\Jir L$ is the union of two chains.  Slim, semimodular lattices were heavily used while proving a recent generalization of the classical Jordan-H\"older theorem for groups in \cite{r:czg-sch-JH}.
These lattices are \emph{planar}, that is, they have planar diagrams, see \cite{r:czg-sch-JH}. Hence it is reasonable to study their planar diagrams, which are called \emph{slim, semimodular $($lattice$)$ diagrams} for short. The \emph{size} of a diagram is the number of elements of the lattice it represents. Let $D_1$ and $D_2$ be two planar lattice diagrams. A bijection $\phi\colon D_1\to D_2$ is a \emph{similarity map} if it is a lattice isomorphism preserving the left-right order of (upper) covers and that of lower covers of each element of $D_1$. If there is a similarity map $D_1\to D_2$, then these two diagrams are  \emph{similar}, and we will treat them as equal ones. 
Let $\numssd n$ denote the number of slim, semimodular diagrams of size $n$, counting them up to similarity. 
Our target is to prove the following result. 

\begin{theorem}\label{thmmain}
There exists a constant  $\konst$ such that $0<\konst<1$ and $\numssd n$ is asymptotically $\konst\cdot 2^n$, that is, $\lim_{n\to\infty}\bigl(\numssd n/ 2^n\bigr)=\konst$.
\end{theorem}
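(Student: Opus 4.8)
The plan is to convert the enumeration problem into the analysis of a single generating function and to read off the asymptotics from its dominant singularity. First I would fix a canonical combinatorial encoding of similarity classes of slim, semimodular diagrams: using the structure theory of these lattices, every such diagram of size $n$ is to be recorded by a rigid discrete datum (a slim, semimodular matrix, or equivalently the combinatorial skeleton determined by its left and right boundary chains together with its fork and corner data), in such a way that similarity classes of diagrams of size $n$ correspond bijectively to these data of the appropriate weight. The point of this step is purely to replace the geometric notion of ``diagram up to similarity'' by a combinatorial object for which a clean recursion is available; once this is done, $\numssd n$ becomes the number of such objects of size $n$.

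Second, I would set up a recursive decomposition by removing a boundary corner. A slim, semimodular diagram always has a removable corner, a doubly irreducible boundary element whose deletion again yields a slim, semimodular diagram, and such a corner can be reinserted in a controlled number of ways. Classifying the corner by its local type (left versus right, weak versus strong, and the length of the covering configuration in which it sits) should yield a finite system of linear recurrences for the subcounts, and hence a rational generating function $F(x)=\sum_{n}\numssd n\,x^n=P(x)/Q(x)$ with $P,Q$ polynomials. The bookkeeping here is where the structure theory does the real work, since one must verify that the reinsertion weights are exactly captured by the recursion and that no similarity class is produced twice.

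Third, I would locate the dominant singularity of $F$. The target asymptotic $\numssd n\sim\konst\cdot 2^n$ with $0<\konst<1$ is exactly the statement that $F$ has a simple pole at $x=1/2$, that this is the unique singularity of smallest modulus, and that the residue there, which equals $-\konst/2$, obeys the stated bounds. The lower bound $\konst>0$ amounts to exhibiting a sufficiently rich family of at least $c_1\cdot 2^n$ diagrams, so that $x=1/2$ is genuinely a pole rather than a removable point; the upper bound $\konst<1$ follows from the structural constraints, which forbid enough binary strings to keep the count strictly below $2^n$ for large $n$. Standard singularity analysis then upgrades ``simple pole at $1/2$, nothing closer to the origin'' to the clean limit $\lim_{n\to\infty}\numssd n/2^n=\konst$.

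I expect the main obstacle to be precisely the passage from a two-sided estimate $\numssd n=\Theta(2^n)$ to the existence of the limit. Proving $c_1 2^n\le\numssd n\le c_2 2^n$ is comparatively soft and could be handled by direct counting or by a subadditivity (Fekete) argument, but that only yields $\lim(\numssd n)^{1/n}=2$, not convergence of $\numssd n/2^n$. Obtaining the sharp statement forces control of the corrections at the right order: either to prove that $F$ really is rational with a simple (not higher-order, and not part of a cluster) pole at $1/2$, or, failing an explicit $Q(x)$, to show directly that $\sum_n|\,\numssd{n+1}-2\,\numssd n\,|\,2^{-n}<\infty$, which already forces $\numssd n/2^n$ to converge. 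Either route rests on a tight understanding of how corner reinsertion interacts with similarity, so the delicate combinatorial lemma, that the corner-removal recursion counts each similarity class with the correct multiplicity, is where the argument will stand or fall.
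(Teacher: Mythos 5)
You have correctly isolated where the difficulty lies --- passing from $\numssd n=\Theta(2^n)$ to the existence of $\lim \numssd n/2^n$ --- but the proposal does not close that gap, and your primary route is almost certainly unworkable. There is no evidence that the generating function $F(x)=\sum_n\numssd n x^n$ is rational with a simple pole at $x=1/2$: the paper's actual error estimate is $\kappa_n/2=1-O\bigl((4/5)^{\sqrt{n}}\bigr)$ where $\kappa_n=\numssd n/\numssd{n-1}$, i.e.\ the relative correction decays like $e^{-c\sqrt n}$, not exponentially in $n$. A rational $F$ with an isolated dominant simple pole at $1/2$ would force $\numssd n=\konst 2^n+O(\rho^n)$ with $\rho<2$, which is incompatible with such subexponential error terms (and with the paper's enormous numerical uncertainty $0.42\cdot10^{-57}\le\konst\le 0.073$ even at $m=50$). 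So the ``rational $F$, read off the residue'' plan should be abandoned rather than attempted.

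Your fallback --- proving $\sum_n\bigl|\numssd{n+1}-2\numssd n\bigr|2^{-n}<\infty$ directly --- is essentially equivalent to what the paper does (it shows $\sum_n\bigl(1-\kappa_n/2\bigr)<\infty$ and hence convergence of the product $\prod_j(\kappa_j/2)$), but you give no argument for it, and this is precisely where all the work is. The missing combinatorial content is the following. Deleting the least doubly irreducible element $\cornl D$ of the left boundary chain gives an injection $D\mapsto D^\ast$ from the diagrams of size $n$ with $\cornl D\ne 0$ into $\ssdset{n-1}$, which immediately yields $\numssd n\le 2\numssd{n-1}$; the defect $2\numssd{n-1}-\numssd n$ is then bounded by the number of $E\in\ssdset{n-1}$ \emph{not} in the image, and the paper characterizes these as exactly the $E$ whose left corner is a coatom. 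For such $E$ the principal ideal below $\cornl E$ is a chain of length $\length E\ge\upintgy{n-1}-1$ (using $|E|\le(1+\length E)^2$), and deleting that chain leaves a slim semimodular diagram of size at most $n+1-\upintgy{n-1}$ that determines $E$. This is what forces the defect to be $O\bigl((4/5)^{\sqrt n}\,2^n\bigr)$, hence summable after dividing by $2^n$. Without an argument of this kind --- a quantitative bound on how far the corner-removal map is from being a bijection --- the proposal establishes at best $\lim(\numssd n)^{1/n}=2$, which you yourself acknowledge is not the theorem.
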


Note that there are two different methods to deal with $\numssd n$. The present one yields the asymptotic statement above, while the method of \cite{r:czgandfour} gives the exact values of $\numssd n$ up to $n=50$ (with the help of a usual personal computer). Also,  \cite{r:czgandfour}  determines the number $\numssl n$ of  slim, semimodular \emph{lattices} of size $n$ up to $n=50$ while we do not even know $\lim_{n\to\infty}\bigl(\numssl n/\numssl{n-1} \bigr)$, and it is only a conjecture that this limit exists.

Note also that, besides \cite{r:czgandfour} and \cite{r:czgolub}, there are several papers on counting  lattices; see, for example, M.~Ern\'e, J.~Heitzig, and J.~Reinhold~\cite{r:erneheitzigreinhold}, M.\,M.~Pawar and B.\,N.~Waphare~\cite{r:PawarWaphare}, and J.~Heitzig and J.~Reinhold~\cite{r:heitzigreinhold}.

\section{Lattice theoretic lemmas}

A minimal non-chain region of a planar lattice  diagram $D$ is called a \emph{cell}, a four-element cell is a \emph{$4$-cell}; it is also a \emph{covering square}, that is, cover-preserving four-element Boolean
sublattice. We say that  $D$ is  a \emph{$4$-cell diagram} if all of its cells are 4-cells. 
We shall heavily rely on the following result of G.~Gr\"atzer and E.~Knapp~\cite[Lemmas 4 and 5]{GK01}.

\begin{lemma}\label{lemmagrkn}
Let $D$ be a finite, planar lattice diagram.
\begin{enumeratei}
\item If $D$  is semimodular, then it is a $4$-cell diagram. If  $A$ and $B$
are $4$-cells of $D$ with the same bottom, then these $4$-cells have the same
top.
\item If $D$ is a $4$-cell diagram in which no two $4$-cells with the same
bottom have distinct tops, then $D$ is semimodular.
\end{enumeratei}
\end{lemma}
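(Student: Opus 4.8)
The plan is to treat the two directions separately, leaning throughout on two structural facts about a finite planar lattice diagram $D$ that belong to the theory of planar lattices rather than to lattice theory proper. First, a cell (minimal non-chain region) has a well-defined bottom $u$ and top $v$, its boundary splits into a left chain and a right chain from $u$ to $v$, and in particular $u$ has exactly two covers on the boundary of the cell. Second, the \emph{betweenness fact}: if $a$ and $b$ are covers of a common element $p$ and a further cover $m$ of $p$ lies between $a$ and $b$ in the left-to-right order of the diagram, then $m \le a \vee b$. On the lattice side I use the definition of upper semimodularity, namely $x \wedge y \lessdot x \Rightarrow y \lessdot x \vee y$, together with its classical equivalent for lattices of finite length: whenever $a \ne b$ both cover $a \wedge b$, the join $a \vee b$ covers both $a$ and $b$.

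For (i) I would first show every cell is a $4$-cell. Let $C$ be a cell with bottom $u$, and let $p$, $q$ be the two covers of $u$ on the left and right boundary of $C$. Since $p \ne q$ both cover $u$, one checks $p \wedge q = u$, so semimodularity applied both ways gives that $w := p \vee q$ covers $p$ and $q$; hence $\{u,p,q,w\}$ is a covering square. Geometrically $w$ lies in the closed region of $C$, so this $4$-cell is a non-chain region contained in $C$ with the same bottom, and minimality of $C$ forces $C = \{u,p,q,w\}$. For the second assertion, suppose two $4$-cells share the bottom $u$ but have different tops. Passing along the sequence of covers of $u$ to an adjacent pair, I may assume there are consecutive covers $a,m,b$ of $u$ with $s := a \vee m \ne m \vee b =: t$, both $s$ and $t$ covering $m$. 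The betweenness fact gives $m \le a \vee b$, hence $a \vee b \ge a \vee m = s$ and $a \vee b \ge m \vee b = t$. But $a \wedge b = u \lessdot a$, so semimodularity yields $b \lessdot a \vee b$; since $b < t \le a \vee b$, this forces $t = a \vee b \ge s$, and then $s \le t$ with both covering $m$ gives $s = t$, a contradiction.

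For (ii), assume $D$ is a $4$-cell diagram in which no two $4$-cells with a common bottom have distinct tops, and verify the cover condition. Let $a \ne b$ both cover $p = a \wedge b$, and let $a = d_0, d_1, \dots, d_k = b$ be the covers of $p$ lying between $a$ and $b$ in the left-to-right order. Each consecutive pair bounds a cell with bottom $p$, which is a $4$-cell by assumption, with top $d_i \vee d_{i+1}$; by hypothesis all these tops coincide in one element $w$. Thus $w$ covers every $d_i$, in particular $w$ covers $a = d_0$ and $b = d_k$, and $w \ge a \vee b$. Conversely the betweenness fact gives $d_i \le a \vee b$ for every $i$, so $w = d_0 \vee d_1 \le a \vee b$. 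Therefore $w = a \vee b$ covers both $a$ and $b$, which is the cover condition, and $D$ is semimodular.

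The main obstacle is not the lattice-theoretic bookkeeping above but the planar-geometry input: making precise that a cell's bottom has exactly two boundary covers, that the closed region of the cell contains $p \vee q$, and above all the betweenness fact $m \le a \vee b$. These require the left-right order of the covers around a vertex and the boundary structure of regions, so I would import them from the structure theory of planar lattice diagrams rather than attempt to rederive them from the definitions.
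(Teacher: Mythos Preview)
The paper does not prove this lemma at all: it is quoted verbatim as ``the following result of G.~Gr\"atzer and E.~Knapp~[Lemmas 4 and 5]'' and then used as a black box. So there is no proof in the paper to compare your proposal against.

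That said, your reconstruction is essentially how the Gr\"atzer--Knapp argument runs, and the lattice-theoretic bookkeeping you give is correct. A couple of small comments. In part~(i), second assertion, your reduction ``passing along the sequence of covers of $u$ to an adjacent pair'' deserves one more sentence: since every cell is a $4$-cell by the first assertion, the $4$-cells with bottom $u$ are exactly those determined by \emph{consecutive} covers of $u$, so if any two of them have distinct tops then two adjacent ones already do. Also, in that same step you can finish symmetrically: from $a\lessdot a\vee b$ and $a\lessdot s\le a\vee b$ you get $s=a\vee b$ just as you got $t=a\vee b$, avoiding the detour through ``$s\le t$ and both cover $m$''. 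In part~(i), first assertion, note that $p\vee q\le v$ (the top of the cell) is automatic, so the real geometric input is only that the covering square $\{u,p,q,p\vee q\}$ is itself a region contained in $C$; this is where planarity is genuinely used.

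You are right that the load-bearing ingredients are the planar facts---the structure of a cell's boundary and the betweenness property of covers---and that these come from the Kelly--Rival theory of planar lattices rather than from pure lattice theory. Citing them, as the paper itself implicitly does by deferring to Gr\"atzer--Knapp, is the appropriate move.
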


In what follows, we always assume that $4\leq n\in\mathbb N=\set{1,2,\ldots}$, and that $D$ is a slim, semimodular diagram of size $n$. Let $\cornl  D$ be the smallest doubly irreducible element of the left boundary chain 
$\leftb D$ of $D$, and let $\brang D$ be the height of $\cornl  D$.The left-right duals of these concepts are denoted by $\cornr  D$ and $\jrang D$. See Figure~\ref{fig:egy} for an illustration, where $\cornl D$ and $\cornr D$ are the black-filled elements. 
By D.~Kelly and I.~Rival \cite[Proposition 2.2]{kellyrival}, each planar lattice diagram with at least three elements contains a doubly irreducible element $\neq 0,1$ on its left boundary. This implies the following statement, on which we will rely implicitly.

\begin{figure}
\centerline
{\includegraphics[scale=0.9]{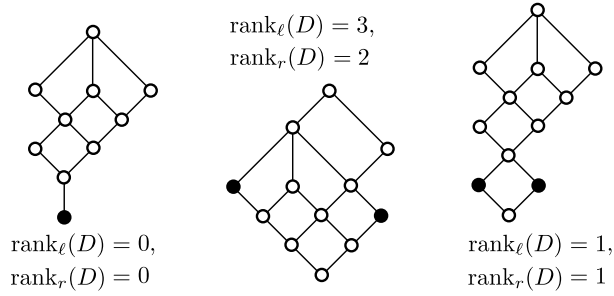}}
\caption{Left and right ranks  \label{fig:egy}}
\end{figure}

\begin{lemma} Either $\brang D=\jrang D=0$ and $\cornl  D=\cornr  D=0$, or 
$\brang D>0$ and $\jrang D>0$.
\end{lemma}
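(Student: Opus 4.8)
The plan is to reduce this two-sided dichotomy to a single, manifestly left--right symmetric property of the least element: whether $0$ is doubly irreducible (equivalently, whether $0$ has a single upper cover). The first thing I would record is the elementary fact that the height of a lattice element is $0$ precisely when the element equals $0$. Thus $\brang D=\height{\cornl D}=0$ holds iff $\cornl D=0$, and likewise $\jrang D=0$ iff $\cornr D=0$. This already collapses the statement to the claim that $\cornl D=0$ if and only if $\cornr D=0$, since whenever a corner is nonzero its height is automatically positive.

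The key observation is then that $0$, being the least element of $D$, lies on both the left boundary chain $\leftb D$ and its right counterpart. Hence, if $0$ happens to be doubly irreducible, it is a doubly irreducible element of $\leftb D$, and being the least element of $D$ it is the smallest such; therefore $\cornl D=0$. Conversely, $\cornl D=0$ forces $0$ to be doubly irreducible, because $\cornl D$ is by definition a doubly irreducible element of $\leftb D$. The identical argument applied to the right boundary shows $\cornr D=0$ iff $0$ is doubly irreducible. Since double irreducibility of a fixed element is visibly insensitive to the left--right orientation of the diagram, the three conditions ``$\cornl D=0$'', ``$0$ is doubly irreducible'', and ``$\cornr D=0$'' are pairwise equivalent, which is exactly the desired alternative.

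To guarantee that the corners exist in the first place, I would invoke the quoted Kelly--Rival result: as $4\leq n$, the chain $\leftb D$ contains a doubly irreducible element different from $0$ and $1$, so the set of doubly irreducible elements of $\leftb D$ is nonempty and $\cornl D$ is well defined; the mirror-image version of the same result takes care of $\cornr D$. In the case where $0$ is \emph{not} doubly irreducible, this existence statement yields $\cornl D\neq 0$, hence $\brang D>0$, and symmetrically $\jrang D>0$; in the case where $0$ is doubly irreducible we land in $\brang D=\jrang D=0$ with $\cornl D=\cornr D=0$.

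I do not expect a genuine obstacle here, as the argument is organizational rather than computational. The one point that must be handled with care is the convention under which $0$ is permitted to be doubly irreducible --- namely, exactly when it has a single upper cover, as happens when $D$ has a pendant edge at the bottom --- since this is precisely the configuration giving the first alternative $\brang D=\jrang D=0$. Once the height-versus-zero equivalence and the left--right symmetry of the double irreducibility of $0$ are in place, the proof is immediate.
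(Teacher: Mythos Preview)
Your proposal is correct and essentially unpacks the paper's one-line justification. The paper gives no explicit proof: it merely records the Kelly--Rival fact that a planar lattice diagram with at least three elements has a doubly irreducible element $\neq 0,1$ on its left boundary, and declares that the lemma follows. Your reduction to the single, left--right symmetric condition ``$0$ is doubly irreducible'' (equivalently, ``$0$ has a unique cover'') is precisely the mechanism making that implication work, and your use of Kelly--Rival to guarantee that $\cornl D$ and $\cornr D$ are well defined is exactly how the paper invokes it.
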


For $a\in D$, the ideal $\set{x\in D:x\leq a}$ is denoted by $\ideal a$.

\begin{lemma}\label{wlalattnoredlMa} 
$\leftb D\cap\ideal{\cornl D}\subseteq \Jir D$. 
\end{lemma}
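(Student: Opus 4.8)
The plan is to argue by contradiction, exploiting that $\cornl D$ is by definition the \emph{smallest} doubly irreducible element of the left boundary chain $\leftb D$: no element of $\leftb D$ strictly below $\cornl D$ can be doubly irreducible. Since $0\in\leftb D\cap\ideal{\cornl D}$ is immediate (it is either excluded from the discussion or counted in $\Jir D$ by convention), I may restrict attention to positive elements, and likewise dispose of the degenerate case $\cornl D=0$, where $\ideal{\cornl D}=\set 0$. Assuming the inclusion fails, I would pick a \emph{minimal} positive $x\in\leftb D\cap\ideal{\cornl D}$ that is join-reducible, i.e.\ that has at least two lower covers.

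First I would locate the relevant $4$-cell. As $x$ lies on the left boundary, its lower cover on $\leftb D$, call it $x^-$, is its leftmost lower cover, and $x^-\prec x$ are consecutive on $\leftb D$, so $x$ is the leftmost upper cover of $x^-$. Let $c$ be the lower cover of $x$ immediately to the right of $x^-$. By Lemma~\ref{lemmagrkn}(i) the diagram $D$ is a $4$-cell diagram, so the cell between the edges $x^-\prec x$ and $c\prec x$ is a $4$-cell $\set{u,x^-,c,x}$ with $u=x^-\wedge c\prec x^-,c$. A quick check rules out $x^-=0$: otherwise $x$ would be an atom and hence join-irreducible, contrary to the choice of $x$. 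Thus $0<x^-<x\leq\cornl D$, and by the minimality of $x$ the element $x^-$ is join-irreducible, so $u$ is its unique lower cover.

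The heart of the argument is to show that $x^-$ is also meet-irreducible, i.e.\ that $x$ is its \emph{only} upper cover. Since $x^-$ sits on the left boundary, the edge $x^-\prec x$ is its leftmost up-going edge, and the $4$-cell $\set{u,x^-,c,x}$ fills the angular sector at $x^-$ lying immediately clockwise of this edge, bounded on its other side by the down-edge $x^-\prec u$. Any further upper cover of $x^-$ would have to lie strictly to the right of $x$, so its edge would fall inside this sector; but the sector is occupied by a single $4$-cell and contains no interior edge, a contradiction. Hence $x^-$ has the unique upper cover $x$ and is meet-irreducible. Together with join-irreducibility, $x^-$ is a doubly irreducible element of $\leftb D$ with $x^-<\cornl D$, contradicting the minimality in the definition of $\cornl D$; this contradiction proves the inclusion.

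I expect the main obstacle to be making the planar reasoning of the third paragraph fully rigorous: one must argue carefully, from the left-boundary position of $x^-$ and from the fact that the cells of $D$ are exactly $4$-cells, that no upper cover of $x^-$ can hide either inside the $4$-cell $\set{u,x^-,c,x}$ or in the sector bounded by the edges $x^-\prec x$ and $x^-\prec u$. Everything else reduces to the minimality of $x$ together with Lemma~\ref{lemmagrkn}(i) and the standard fact that the leftmost lower and upper covers of a left-boundary element again lie on $\leftb D$.
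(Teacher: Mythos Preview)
Your approach is sound and, once the planar ``consecutive-edge'' step you flag is made precise, gives a correct proof; it is, however, genuinely different from the paper's. Both arguments begin the same way---take the least join-reducible element ($x$, the paper's $u$) in $\leftb D\cap\ideal{\cornl D}$ and look at its lower cover $x^-$ (the paper's $u_0$) on $\leftb D$---but then proceed in opposite directions. You show that $x^-$ is \emph{meet-irreducible}: since $x^-$ is join-irreducible by minimality of $x$, the $4$-cell to the right of the boundary edge $x^-\prec x$ has its bottom at the unique lower cover of $x^-$, so the up-edge to $x$ and the down-edge are consecutive at $x^-$ and no second upper cover fits; hence $x^-$ is a doubly irreducible element of $\leftb D$ strictly below $\cornl D$, contradicting the definition of $\cornl D$. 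The paper instead shows that $u_0$ is \emph{meet-reducible}: it invokes Kelly--Rival to locate a doubly irreducible $v$ of $\ideal u$ on $\leftb D$, notes that $v$ must be meet-reducible in $D$ (otherwise it would beat $\cornl D$), and then uses semimodularity to push a second cover of $v$ up to a second cover of $u_0$; the contradiction is obtained from an external result, \cite[Lemma~4]{r:czg-sch-visual}, forbidding a boundary edge $u_0\prec u$ with $u$ join-reducible and $u_0$ meet-reducible. Your route is more self-contained (it needs nothing beyond Lemma~\ref{lemmagrkn}(i) and basic planar-diagram facts), at the price of spelling out the cell/sector argument; the paper outsources that planar work to the cited lemma, at the cost of the Kelly--Rival detour and the semimodularity push.
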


\begin{proof}
Suppose, for a contradiction, that the lemma fails, and let $u$ be the smallest join-reducible element belonging to  $\leftb D\cap\ideal{\cornl D}$. 
 By D.~Kelly and I.~Rival \cite[Proposition 2.2]{kellyrival}, there is a doubly irreducible element $v$ of the ideal $\ideal u=\set{x\in D: x\leq u}$ such that 
$v\in \leftb{\ideal u}$; notice that  $v$ also belongs to  $\leftb D$. 
Clearly, $v<u$ and $v$ is join-irreducible in $D$. Therefore, since $v<u\leq \cornl D$ and $\cornl D$ is the least doubly irreducible element  of $\leftb D$, $v$ is meet-reducible in $D$. 
Hence there exist a $p\in D$ such that $v\prec p$ and $p\notin \ideal u$. Denote by $u_0$ the unique lower cover of $u$ in $\leftb D$. Since $v<u$, we have that $v\leq u_0$. By semimodularity and $p\not\leq u_0$, we obtain that $u_0=u_0\vee v\prec u_0\vee p\neq u$. Hence $u_0$ has two covers, $u$ and  $u_0\vee p$. Thus $u_0,u\in\leftb D$, $u_0\prec u$, $u$ is join-reducible, and $u_0$ is meet-reducible. This contradicts \cite[Lemma 4]{r:czg-sch-visual}.
\end{proof}

Next, we prove the following lemma.

\begin{lemma}\label{lMasieaeh}
 For $4\leq n\in\mathbb N$, we have that
\begin{align}
\numssd{n-1}+\numssd{n-3}&\leq \numssd n\label{kregy},\\
\numssd n&\leq 2\cdot\numssd{n-1}\text.\label{krket}
\end{align}
\end{lemma}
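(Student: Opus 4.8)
The plan is to establish the two inequalities separately, building injections (for the lower bound) and a surjection-type dichotomy (for the upper bound) between the sets of slim, semimodular diagrams counted by $\numssd{\cdot}$, all respecting similarity.

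\textbf{The upper bound \eqref{krket}.} First I would fix a slim, semimodular diagram $D$ of size $n\geq 4$ and seek to recover it from a smaller diagram together with one bit of information. The natural candidate is to delete a suitable join-irreducible corner element. By Lemma~\ref{wlalattnoredlMa} and the theory of left/right ranks just developed, the element $\cornl D$ (or $\cornr D$) is a doubly irreducible corner on the boundary, so removing it should yield a slim, semimodular diagram $D'$ of size $n-1$: removing a doubly irreducible boundary element preserves the $4$-cell condition and the ``same bottom forces same top'' condition of Lemma~\ref{lemmagrkn}, hence preserves semimodularity by part~(ii). The point is that there are essentially two choices for where the deleted element sits relative to the boundary structure (encoded, say, by whether it is removed from the left or the right, or by a binary invariant such as $\brang D$ versus $\jrang D$), so the map $D\mapsto(D',\text{one bit})$ is injective. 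This gives $\numssd n\leq 2\cdot\numssd{n-1}$. The care needed here is to verify that the reconstruction is well-defined up to similarity and that the single bit genuinely suffices to invert the deletion.

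\textbf{The lower bound \eqref{kregy}.} For this direction I would construct two injections with disjoint images into the size-$n$ diagrams. The first injection takes a diagram $D'$ of size $n-1$ and glues on a single new doubly irreducible element at a canonical spot on a boundary chain (for instance atop $\cornl{D'}$ or at the top of the left boundary), producing a slim, semimodular diagram of size $n$; semimodularity of the result is checked via Lemma~\ref{lemmagrkn}(ii), since adding one covering edge at a boundary does not create two $4$-cells with the same bottom and distinct tops. The second injection takes a diagram $D''$ of size $n-3$ and attaches a small standard gadget of three new elements (most naturally a new $4$-cell glued along a boundary edge, which adds exactly three elements) so as to again land in the slim, semimodular class. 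The crux is to arrange these two constructions so that their images are \emph{disjoint} — e.g.\ distinguishable by a boundary invariant such as the parity or the value of $\brang{\cdot}$ of the resulting diagram — whence $\numssd{n-1}+\numssd{n-3}\leq\numssd n$.

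\textbf{Main obstacle.} I expect the hard part to be the careful bookkeeping of the planar-diagram structure under these local surgeries: one must confirm that each construction produces a genuine \emph{slim} diagram (the join-irreducibles still form the union of two chains, with no three-element antichain), that it stays \emph{semimodular} via the $4$-cell criterion of Lemma~\ref{lemmagrkn}, and above all that the maps are well-defined and injective \emph{modulo similarity} rather than mere isomorphism, since left-right order of covers must be tracked. Ensuring the two lower-bound injections have provably disjoint ranges — so that their contributions add rather than overlap — is the most delicate point, and I would resolve it by pinning down a similarity-invariant of the boundary (such as $\cornl{\cdot}$, $\brang{\cdot}$, or $\jrang{\cdot}$) that takes distinguishable values on the two families.
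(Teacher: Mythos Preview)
Your high-level strategy is right and close to the paper's, but the specific ``one bit'' you propose for the upper bound does not work, and your gluing sites for the lower bound are not the ones that make disjointness transparent.

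For \eqref{krket}, the paper does \emph{not} use a left/right dichotomy. Instead it splits $\ssdset n$ according to whether $\brang D=0$ (equivalently, whether $0_D$ has a unique upper cover). If $\brang D=0$, one deletes $0_D$; if $\brang D>0$, one deletes $\cornl D$. The first map is an honest bijection $\ssdnulset n\to\ssdset{n-1}$. The second is injective on $\ssdpozset n$ because, by \eqref{ujsarok}, the new left corner of $\csonkad=D\setminus\{\cornl D\}$ is exactly the lower cover of $\cornl D$, so one knows precisely where to reinsert the missing element. Your ``left versus right'' bit carries no information here: if you always remove $\cornl D$, the bit is constant; and the genuine ambiguity in the preimage is not left/right but rather whether the deleted element was $0_D$ or a higher corner. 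That is the bit you need.

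For \eqref{kregy}, the paper's two injections both act at the \emph{bottom}: adding a new $0$ below $0_{D'}$ (image $=\ssdnulset n$, characterised by $\brang{}=\jrang{}=0$), and adding three new elements forming a $4$-cell below $0_{D''}$ (image $=\ssdeeset n$, characterised by $\brang{}=\jrang{}=1$). Disjointness is then immediate from the value of $\brang{}$. Your suggestion of gluing ``atop $\cornl{D'}$'' produces diagrams with $\brang{}=\brang{D'}+1$, which ranges over all positive integers and is not obviously disjoint from the image of any natural three-element gadget; and gluing a $4$-cell ``along a boundary edge'' adds only two elements, not three. Moving both constructions to the bottom of the diagram, as the paper does, resolves the disjointness issue for free and makes Lemma~\ref{lemmagrkn} trivially applicable.
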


\begin{proof}
The set of slim, semimodular diagrams of size $n$ is denoted by $\ssdset n$. Let 
\begin{align*}\ssdnulset n&=\set{D\in \ssdset n: \brang D=\jrang D=0},\cr
\ssdeeset n &= \set{D\in \ssdset n: \brang D=\jrang D=1},\text{ and}\cr
\ssdpozset n &= \ssdset n - \ssdnulset n
\text.
\end{align*}
Since we can omit the least element and the least three elements, respectively, and the remaining diagram is still slim and semimodular by Lemma~\ref{lemmagrkn}, we conclude that $|\ssdnulset n|=\numssd{n-1}$ and 
$|\ssdeeset n|=\numssd{n-3}$. This implies \eqref{kregy}.
For $D\in \ssdpozset n$, we define 
\[\csonkad=D-\set{\cornl D}\text.
\]
We know from By D.~Kelly and I.~Rival \cite[Proposition 2.2]{kellyrival}, mentioned earlier, that 
\begin{equation}\label{sarokhely}
\cornl D\notin\set{0,1}\text{, \ provided }D\in\ssdpozset n\text.
\end{equation}
This together with the fact that $D\in \ssdpozset n$ is not a chain yields that 
\begin{equation}\label{hosszmarad}
\length \csonkad =\length D\text.
\end{equation}
Let $\ulcover{{\cornl D}}$ denote the unique lower cover of $\cornl D$ in $D$.
Since each meet-reducible element has exactly two covers by \cite[Lemma 2]{r:czg-sch-visual}, we conclude from Lemma~\ref{wlalattnoredlMa} that 
\begin{equation}\label{ujsarok}
\cornl{\csonkad}= \ulcover{{\cornl D}} \text.
\end{equation}
It follows from Lemma~\ref{lemmagrkn} that $\csonkad\in\ssdset {n-1}$. From \eqref{ujsarok} we obtain that 
\begin{equation}\label{kdqWie}
\text{$\csonkad\in \ssdset{n-1}$ determines $D$.} 
\end{equation}
Hence $|\ssdpozset n|\leq |\ssdset{n-1}|=\numssd{n-1}$. Combining this with $|\ssdnulset n|=\numssd{n-1}$ and $\ssdset n=\ssdnulset n\mathrel{\dot\cup} \ssdpozset n$, where $\dot\cup$ stands for disjoint union, we obtain 
\eqref{krket}.
\end{proof}                

Next, let 
\[W(n)=\ssdset{n-1} - \set{\csonkad: D\in \ssdpozset n}\text.
\]
This is the ``wrong'' set from our perspective since $W(n)=\varnothing$, which is far from reality, would turn inequality \eqref{krket} into an equality. Fortunately, this set is relatively small by the following lemma.
The upper integer part of a real number $r$ is denoted $\upintp x$, for example, $\upintgy 2=2$.

\begin{lemma}\label{wsetsmall} If $4\leq n$, then 
$\,\displaystyle{|W(n)|\leq \sum_{j=2}^{n+1-\upintgy {n-1}} \numssd j }$.
\end{lemma}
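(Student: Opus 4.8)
The plan is to pin down $W(n)$ exactly and then inject it into $\bigsqcup_{j=2}^{\,n+1-\upintgy{n-1}}\ssdset j$. First I would decide which $E\in\ssdset{n-1}$ fail to be a $\csonkad$. Reversing the truncation of Lemma~\ref{lMasieaeh} means reinserting a doubly irreducible vertex $w$ just above $c:=\cornl E$ on the left boundary so that $c,w,\uucover{c},w^{+}$ become a $4$-cell, where $\uucover{c}$ is the unique upper cover of $c=\cornl E$ in $E$ and $w^{+}$ is the next vertex of $\leftb E$ above $\uucover c$. By \eqref{ujsarok}, Lemma~\ref{wlalattnoredlMa}, and the requirement (Lemma~\ref{lemmagrkn}) that the enlarged diagram again be a $4$-cell diagram, this reinsertion succeeds exactly when there is room above $c$, i.e.\ when $\uucover c\neq 1$, and is obstructed when $\uucover c=1$. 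Hence I expect
\[W(n)=\set{E\in\ssdset{n-1}:\cornl E\prec 1}=\set{E\in\ssdset{n-1}:\brang E=\length E-1},\]
the second equality holding because a finite semimodular, hence graded, diagram has every element of height $\length E-1$ covered by $1$.

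Next I would bound the length from below on $W(n)$ using slimness. Since $\Jir E$ is the union of two chains $C_1,C_2$, every $x\in E$ equals $j_1(x)\vee j_2(x)$, where $j_i(x)$ is the largest element of $C_i$ below $x$ (and $0$ if there is none). The assignment $x\mapsto(j_1(x),j_2(x))$ is injective, so $|E|\le(|C_1|+1)(|C_2|+1)\le(\length E+1)^2$. For $E\in W(n)$ we have $\length E=\brang E+1$, whence $n-1=|E|\le(\brang E+2)^2$; as $\brang E$ is an integer this gives $\brang E\ge\upintgy{n-1}-2$.

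Finally I would construct the injection by iterating the truncation $D\mapsto\csonkad$. For $E\in W(n)$ put $\Phi(E)=E^{(\brang E)}$, the diagram obtained by removing the left corner $\brang E$ times; each removal is legitimate and lowers $\brang$ by one, so $\Phi(E)\in\ssdnulset{n-1-\brang E}$ has size $n-1-\brang E\le n+1-\upintgy{n-1}$ by the previous paragraph, while keeping at least two elements. For injectivity I would reconstruct: by \eqref{hosszmarad} truncation preserves length, so along $E=E^{(0)},E^{(1)},\dots,E^{(\brang E)}$ the length stays $\length E$ while the corner height drops by one each step, and only $E^{(0)}=E$ has its corner at the coatom level $\length E-1$. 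Thus, starting from $\Phi(E)$ and repeatedly applying the reverse-truncation (well defined by \eqref{kdqWie}) until the corner first reaches height $\length{\Phi(E)}-1$ recovers $E$ uniquely, so $\Phi$ is injective and $|W(n)|\le\sum_{j=2}^{\,n+1-\upintgy{n-1}}\numssd j$.

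I expect the main obstacle to be the first step: proving rigorously that reverse-truncation succeeds precisely when $\cornl E\not\prec 1$. This needs a careful planar and semimodular argument that the reinserted $w$ is doubly irreducible and becomes the new $\cornl{}$ (all vertices of $\leftb E$ below $c$ remaining meet-reducible), and that no rival $4$-cell with the same bottom is created, so Lemma~\ref{lemmagrkn}(ii) keeps the larger diagram semimodular; the degenerate case $\cornl E=0$, where one reinserts a new atom, must be treated separately. A secondary point is verifying that the reconstruction used for injectivity halts at the correct diagram, which rests on the length invariance \eqref{hosszmarad} and the strict monotonicity of the corner height.
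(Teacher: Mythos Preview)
Your proposal is correct and follows the same overall strategy as the paper: characterize $W(n)$ as those $E\in\ssdset{n-1}$ with $\cornl E$ a coatom, bound $\length E$ from below via $|E|\le(\length E+1)^2$, and inject $W(n)$ into $\bigcup_j\ssdset j$ by stripping off the left-boundary chain below the corner.

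The implementation of the injection differs. The paper removes the whole ideal $\ideal{\cornl E}$ in one stroke, setting $\cissor E=E\setminus\ideal{\cornl E}$, and must then prove directly that this order-filter is meet-closed (a short but genuine argument using semimodularity and the two-cover property) so that $\cissor E$ is again a slim semimodular diagram; injectivity (``$\cissor E$ determines $E$'') is then declared trivial. You instead iterate the single-vertex truncation $D\mapsto\csonkad$ from Lemma~\ref{lMasieaeh} exactly $\brang E$ times, thereby recycling \eqref{hosszmarad}, \eqref{ujsarok}, and \eqref{kdqWie} at every step and avoiding the meet-closure argument entirely; the price is that your injectivity proof requires the reconstruction argument via length-invariance that you outline. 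The two target diagrams differ only in whether $0_E$ is retained (your $\Phi(E)=\cissor E\cup\{0_E\}$), which explains the matching index range in the sum. Your route is a bit more modular; the paper's makes the image explicit in one move. The obstacle you flag---verifying that reverse truncation succeeds precisely when $\cornl E$ is not a coatom---is handled in the paper with the same brevity you anticipate, by appeal to Lemma~\ref{lemmagrkn}.
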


\begin{proof}
First we show that
\begin{equation}\label{wsetdescr}
W(n)=\set{E\in\ssdset{n-1}: \cornl E\text{ is a coatom of }E}\text. 
\end{equation}
The $\subseteq$ inclusion is clear from \eqref{sarokhely}, \eqref{hosszmarad}, and \eqref{ujsarok}. These facts together with Lemma~\ref{lemmagrkn} also imply the reverse inclusion since by adding a new cover to $\cornl E$, to be positioned to the left of $\leftb E$,  we obtain a slim, semimodular diagram $D$ such that $\csonkad=E$.

It follows from Lemma~\ref{wlalattnoredlMa} that
no down-going chain starting at $\leftb E$ can branch out. Thus  
\begin{equation}\label{ottaClc}
\ideal{{\cornl E}}\subseteq \leftb E\text{  
and }
\ideal{{\cornl E}}\text{ is a chain.}
\end{equation}
Since $\cornl E$ is a coatom, we have that
\begin{equation}\label{msiszH}
\text{with the notation }\cissor E=E\setminus\ideal{\cornl E}, \quad |\cissor E|=|E|-\length E\text.
\end{equation}
Clearly, $\cissor E$ is a join-subsemilattice of $E$ since it is an order-filter. To prove that
\begin{equation}\label{cissesubl}
\cissor E\text{ is a slim, semimodular diagram,}
\end{equation}
assume that $x,y\in\cissor E-\set 1$.
We want to show that $x\wedge y$, taken in $E$, belongs to $\cissor E$.  Let $x_0$ and $y_0$ be the smallest element of $\leftb E\cap \ideal x$ and $\leftb E\cap \ideal y$, respectively. Since $x_0,y_0\in \leftb E\cap\bigl(\ideal{\cornl E}-\set{\cornl E} \bigr)$, \eqref{saaqusio} implies that $x_0$ and $y_0$ are meet-reducible. 
Hence they have exactly two covers by \cite[Lemma 2]{r:czg-sch-visual}.
Let $x_1$ and $y_1$ denote the cover of $x_0$ and $y_0$, respectively, that do not belong to $\leftb E$, and let $\uucover x$ and $\uucover y$ be the respective covers belonging to $\leftb E$.  By the choice of $x_0$, we have that $\uucover x\not\leq x$, whence $x_1\leq x$. Similarly, $y_1\leq y$.  Since $\leftb E$ is a chain and the case $x_0=y_0$ will turn out to be trivial, we can assume that $x_0<y_0$. We know that $x_1\not\leq y_0$ since otherwise $x_1$ would belong to $\leftb E$ by \eqref{ottaClc}. Using semimodularity, we obtain that $x_1\vee y_0\succ y_0$. Since $y_0$ has only two covers by \cite[Lemma 2]{r:czg-sch-visual} and 
$x_1\leq \uucover y$ would imply $x_1\in \leftb E$ by \eqref{ottaClc}, it follows that 
$x_1\vee y_0 =y_1$. Hence $x_1\leq y$, $x_1\leq 
x$, and $x_1\in\cissor E$ implies that $x\wedge y$ belong to (the order filter) $\cissor E$. 
Thus $\cissor E$ is (to be more precise, determines) a sublattice of (the lattice determined by) $E$. The semimodularity of $\cissor E$ follows from Lemma~\ref{lemmagrkn}.
This proves \eqref{cissesubl}. 

By \eqref{msiszH}, \eqref{cissesubl}, by a trivial argument,
\begin{equation}\label{isEhwt}
\cissor E\in \ssdset{n-\length E}\text{ and } \cissor E\text{ determines } E\text.
\end{equation}

Next, we have to determine what values $h=\length E$ can take. Clearly, $h\leq |E|-1=n-2$.  There are various ways to check that $|E|\leq(1+\length E)^2= (1+h)^2$; this follows from  the main theorem of \cite{czgschperm}, and follows also from the proof of \cite[Corollary 2]{r:czg-sch-howtoderive}. Since now $|E|=n-1$, we obtain that 
$\upintgy {n-1} -1\leq h$. Therefore, combining  \eqref{cissesubl}, \eqref{isEhwt}, we obtain that 
\[W(n)\leq \sum_{h= \upintgy {n-1} -1 }^{n-2}\numssd{n-h}\text. 
\]
Substituting $j$ for $n-h$ we obtain our statement.
\end{proof}

We conclude this section by the following lemma.

\begin{lemma}\label{beSsclma}
$\displaystyle{
2\cdot\numssd{n-1}- \sum_{j=2}^{n+1-\upintgy {n-1}} \numssd j  \leq \numssd n\leq 2\cdot\numssd{n-1} } .$
\end{lemma}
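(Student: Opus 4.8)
The plan is to notice that essentially all the work is already done: the right-hand inequality $\numssd n\leq 2\cdot\numssd{n-1}$ is literally inequality \eqref{krket} of Lemma~\ref{lMasieaeh}, so only the left-hand inequality needs an argument, and even there the set $W(n)$ has been tailored precisely to measure the gap by which \eqref{krket} fails to be an equality. The key is therefore to upgrade the cardinality estimate in the proof of Lemma~\ref{lMasieaeh} from an inequality to an exact identity, and then to plug in the bound on $|W(n)|$ supplied by Lemma~\ref{wsetsmall}.

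First I would make the count exact. Recall the disjoint decomposition $\ssdset n=\ssdnulset n\mathrel{\dot\cup}\ssdpozset n$ together with $|\ssdnulset n|=\numssd{n-1}$. By \eqref{kdqWie}, the truncation map $D\mapsto\csonkad$ is injective on $\ssdpozset n$, so $|\ssdpozset n|$ equals the size of its image $\set{\csonkad:D\in\ssdpozset n}\subseteq\ssdset{n-1}$. Since $W(n)$ is by definition the complement of this image inside $\ssdset{n-1}$, we get the exact value
\[
|\ssdpozset n|=|\ssdset{n-1}|-|W(n)|=\numssd{n-1}-|W(n)|\text.
\]
Adding $|\ssdnulset n|=\numssd{n-1}$ then yields the clean formula $\numssd n=2\cdot\numssd{n-1}-|W(n)|$, which already displays $|W(n)|$ as the exact deficiency in \eqref{krket}.

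The lower bound now follows in one line: applying $|W(n)|\leq\sum_{j=2}^{n+1-\upintgy{n-1}}\numssd j$ from Lemma~\ref{wsetsmall} to this identity gives $\numssd n\geq 2\cdot\numssd{n-1}-\sum_{j=2}^{n+1-\upintgy{n-1}}\numssd j$, and combining with \eqref{krket} produces the stated double inequality. I expect no genuine obstacle here, since the substantive ingredients—the injectivity of truncation in \eqref{kdqWie}, the disjoint decomposition, and the size estimate for $W(n)$—are all in hand; the only point meriting a word is that $D\mapsto\csonkad$ really does map $\ssdpozset n$ onto $\ssdset{n-1}-W(n)$, but this is immediate from the definition of $W(n)$ once \eqref{wsetdescr} is invoked to characterize exactly which diagrams of size $n-1$ arise as some $\csonkad$.
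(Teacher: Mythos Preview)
Your proposal is correct and follows essentially the same line as the paper: derive the exact identity $\numssd n = 2\cdot\numssd{n-1} - |W(n)|$ from the disjoint decomposition, the injectivity in \eqref{kdqWie}, and the definition of $W(n)$, and then invoke Lemma~\ref{wsetsmall} for the lower bound and \eqref{krket} for the upper bound. The only superfluous remark is your appeal to \eqref{wsetdescr}: the fact that the image of $D\mapsto\csonkad$ is $\ssdset{n-1}\setminus W(n)$ is the \emph{definition} of $W(n)$, so no characterization is needed.
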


\begin{proof}
By \eqref{kdqWie} and the definition of $W(n)$, we have that 
\begin{align*}
\numssd n&=|\ssdnulset n|+|\ssdpozset n|= \numssd{n-1} + |\ssdset{n-1}-W(n)|\cr
&=\numssd{n-1} + \numssd{n-1} - |W(n)|,
\end{align*}
and the statement follows from Lemma~\ref{wsetsmall} and \eqref{krket}.
\end{proof}

\section{Tools from Analysis at work} 
For $k\ge 2$, define $\kappa_k=\numssd k/\numssd{k-1}$. Since $\numssd{n-3}/\numssd{n-1}=1/(\gamma_{n-1}\gamma_{n-2})$, dividing the inequalities of Lemma~\ref{lMasieaeh} by $\numssd{n-1}$ we obtain that $1+1/(\kappa_{n-1}\kappa_{n-2})\leq \kappa_n\leq n$, for $n\geq 4$. Therefore, since $\kappa_k\leq 2$ also holds for $k\in\set{2,3}$ and $1+1/(2\cdot 2)=5/4$, we conclude that 
\begin{equation}\label{kpabeCSz}
5/4 \leq \kappa_n\leq 2\text{, \quad for }n\geq 4\text.
\end{equation}
Clearly, $\numssd{k-1}=\numssd k/\kappa_n\leq \frac 45\cdot \numssd k$ if $k\geq 4$. Thus, by iteration, we obtain that 
\begin{equation}\label{vkrot}
\numssd {k-j}\leq (4/5)^j\cdot \numssd k,\qquad\text{for }
j\in\mathbb N_0\text{ and }k\geq j+4\text.
\end{equation}
If $k\geq 5$, then using $\numssd k\geq \numssd 5\geq 3$ (actually, $\numssd 5=3$), we obtain that 
\begin{align}\label{hataka}
\begin{aligned}
\numssd 1&+\cdots+\numssd k=1+1+1+\numssd 4+\cdots+\numssd k\cr
&\leq 3+\numssd k\cdot \bigl( (4/5)^{k-4} +(4/5)^{k-5}+\cdots + (4/5)^{0}\bigr)\cr
&\leq \numssd k+ \numssd k\cdot 1/(1-4/5)= 6\numssd k\text.
\end{aligned}
\end{align}
Combining Lemma~\ref{beSsclma} with \eqref{hataka}  and \eqref{vkrot} we obtain that 
\begin{align*}
2 \numssd{n-1}&- 6\cdot(4/5)^{\upintgy {n-1}-2}\cdot \numssd{n-1}\leq 
\cr
& 2\numssd{n-1}- 6\numssd{n+1-\upintgy {n-1}}
\cr
& \leq \numssd n\leq 2 \numssd{n-1}\text. 
\end{align*}
Dividing the formula above by $2\numssd{n-1}$ and \eqref{kpabeCSz} by 2, we obtain that
\begin{equation}\label{kRotunk}
\max\bigl(5/8\,,\,  1-  3\cdot(4/5)^{\upintgy {n-1}-2}\bigr) \leq \kappa_n/2\leq 1,\quad\text{for }n\geq 5\text.
\end{equation}

Next, let us choose an integer $m\geq 5$, and define 
\[ z_0=z_0(m)=\min\bigl(3/8\, , \, 3\cdot(4/5)^{\upintgy {m-1}-2} \bigr)\text. 
\]

\begin{figure}
\centerline
{\includegraphics[scale=0.9]{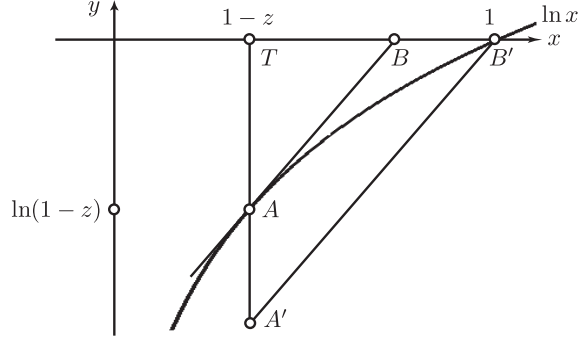}}
\caption{An illustration to Lemma~\ref{loglemma}  \label{fig:ket}}
\end{figure}

\begin{lemma}\label{loglemma}
If $0<z\leq z_0$, then 
\[-\ln(1-z)\leq z/(1-z)\leq z/(1-z_0)\text.\]
\end{lemma}

\begin{proof}
The statement follows from $\ln'(1-z)=1/(1-z)$ and the similarity of the triangle $ABT$ to the triangle $A'B'T$, see Figure~\ref{fig:ket}. 
\end{proof}

With the auxiliary steps made so far, we are ready to start the final argument.

\begin{proof}[Proof of Theorem~\ref{thmmain}]
For $n>m$, let 
\[
p_n=\prod_{j=m+1}^n (\kappa_j/2)\text.
\]
Clearly,
\begin{equation}\label{swieTuq}
\numssd n/2^n = p_n\cdot \numssd m/2^m\text.
\end{equation}
Hence it suffices to prove that the sequence $\set{p_n}$, that is $\set{p_n}_{n=m+1}^{\infty}$, is convergent. 
Let $s_n=-\ln p_n$, $\mu=3(1-z_0)^{-1}$, $\alpha=4/5$, and $\nu=5\mu/4=\mu/\alpha$. Then,
using \eqref{kRotunk} together with  Lemma~\ref{loglemma} at $\leq'$, 
\eqref{kRotunk} at $\leq^\ast$, and using that the function $f(x)=\alpha ^{\sqrt {x}}$ is  decreasing, we obtain that 
\allowdisplaybreaks{
\begin{align*}
0&< s_n =  \sum_{j=m+1}^n\bigl( -\ln(\kappa_j/2)\bigr)
\leq'  \sum_{j=m+1}^n (1- \kappa_j/2)/(1-z_0)\cr
&\leq^\ast \mu \cdot \sum_{j=m+1}^n \alpha ^{\upintgy {j-1}-2} 
\leq  \mu \cdot \sum_{j=m+1}^n \alpha ^{\sqrt{j-1}-1} 
\leq  \mu \cdot \sum_{k=m}^{n-1} \alpha ^{\sqrt{k}-1} 
\cr&
= 
 \nu \cdot \sum_{k=m}^{n-1} \alpha ^{\sqrt{k}}   
\leq   \nu \cdot \int_{x=m-1}^{n-1} \alpha ^{\sqrt{x}}dx \leq   \nu \cdot \bigl(F(\infty)-F(m-1)\bigr),
\end{align*}
}%allowdisplaybreaks
where $F(x)$ is a primitive function of $f(x)$.
Let $\delta=-\ln\alpha=\ln{(5/4)}$. 
It is routine to check (by hand or by computer algebra) that, up to a constant summand,
\[
F(x) = - 2\cdot \delta ^{-2}\cdot (1+\delta \sqrt x) \cdot \alpha^{\sqrt x}\text.
\]
Clearly, $F(\infty)=\lim_{x\to\infty}F(x)=0$. This proves that the sequence $\set{s_n}$ converges; and so does $\set{p_n}=\set{e^{-s_n}}$ by the continuity of the exponential function. Therefore, since $\numssd m/2^m$ in \eqref{swieTuq} does not depend on $m$, we conclude  Theorem~\ref{thmmain}.
\end{proof}

\begin{remark}
We can approximate the constant  in Theorem~\ref{thmmain} as follows. Since
$ e^{-\nu \cdot (F(\infty)-F(m)) }   \leq  e^{-s_n}  =p_n\leq 1$ and, by \eqref{swieTuq}, 
$\konst =\lim_{n\to\infty}\bigl( p_n\numssd m /2^m \bigr)$, we obtain that 
\begin{equation}\label{beCsztC}  
e^{\nu  F(m) }\cdot \numssd m /2^m 
=
e^{-\nu \cdot (F(\infty)-F(m)) }\cdot \numssd m /2^m 
\leq  \konst \leq \numssd m /2^m 
\text.
\end{equation}
Unfortunately, our computing power
yields only a very rough  estimation. The largest $m$ such that $\numssd{50}$ is known is $m=50$, see \cite{r:czgandfour}. With $m=50$ and $\numssd m=\numssd{50}= 81\,287\,566\,224\,125$, it is a routine task to turn \eqref{beCsztC} into 
\[
 0.42\cdot 10^{-57}   \leq   C\leq 0.073\text{ .}
\]
We have reasons (but no proof) to believe that $0.023 \leq C\leq  0.073$, see the Maple worksheet (version V) available from the authors's home page.
\end{remark}

\begin{ackno}The author is indebted to Vilmos Totik for helpful discussions.
\end{ackno}

%\newpage
%Font for the figure
%$\ln x$, $x,y$, $A,A',B,B', 1-z, -z, \ln(1-z), T,1,\overbrace{1-z}$
%$\brang D=3$, $\jrang D=2$
%
%$\brang D=1=\jrang D$
%
%$\brang D=0=\jrang D$
%
%$\cornl D$, $\cornr D$
%%
%\newpage

\end{document}